\newtheorem{theorem}{Theorem}[section]
\newtheorem*{theorem*}{Theorem}
\newtheorem{lemma}{Lemma}[section]
\newtheorem{corollary}[theorem]{Corollary}
\newtheorem{proposition}{Proposition}[section]
\def\i{\sqrt{-1}}
\def\Ric{\text{Ric}}
\def\i{\sqrt {-1}}
\def\aint{\frac{\ \ }{\ \ }{\hskip -0.4cm}\int}
\def\Ric{\operatorname{Ric}}
\numberwithin{equation}{section}
\begin{document}
	\title[positivity of $k$-scalar curvature]{Positivity and Kodaira embedding theorem}

\author{Lei Ni}\thanks{The research of LN is partially supported by NSF grant DMS-1401500 and  the  ``Capacity Building for Sci-Tech Innovation-Fundamental Research Funds".}
\address{Lei Ni. Department of Mathematics, University of California, San Diego, La Jolla, CA 92093, USA}
\email{leni@ucsd.edu}

\author{Fangyang Zheng} \thanks{The research of FZ is partially supported by a Simons Collaboration Grant 355557.}
\address{Fangyang Zheng. School of Mathematical Sciences, Chongqing Normal University, Chongqing 401331, China}
\email{{franciszheng@yahoo.com}}

\subjclass[2010]{32L05, 32Q10, 32Q15, 53C55}
\keywords{Compact complex manifolds, K\"ahler metrics, positive holomorphic sectional curvature, $k$-th scalar curvature}

\begin{abstract}
Kodaira embedding theorem provides an effective characterization of projectivity of a K\"ahler manifold in terms the second cohomology.
 Recently  X. Yang \cite{XYang} proved that any compact K\"ahler manifold with positive holomorphic sectional curvature must be projective. This gives a metric criterion of the projectivity in terms of its curvature. In this note, we prove that any compact K\"ahler manifold with positive 2nd scalar curvature (which is the average of holomorphic sectional curvature over $2$-dimensional subspaces of the tangent space) must be projective. In view of generic 2-tori being non-Abelian, this new curvature characterization  is sharp in certain sense.

\end{abstract}

\maketitle

\section{Introduction}

Let $(M^m, g)$ be a K\"ahler manifold with complex dimension $m$. For $x\in M$, denote by $T_x'M$ the holomorphic
tangent space at $x$. Let $R$ denote the curvature tensor. For $X\in T_x'M$ let $H(X)=R(X, \bar{X}, X, \bar{X})/|X|^4$ be
the holomorphic sectional curvature. Here $|X|^2=\langle X, \bar{X}\rangle$, and we extended the Riemannian product $\langle \cdot, \cdot\rangle$  and the curvature tensor $R$  linearly over ${\mathbb C}$, following the convention of \cite{NZ}. We say that $(M, g)$ has
positive holomorphic sectional curvature, if $H(X)>0$ for any $x\in M$ and any $0\neq X\in T_x'M $. It was known that compact manifolds with positive holomorphic sectional curvature must be simply connected \cite{Tsu}. A three circle property was established for noncompact complete K\"ahler manifolds with nonnegative holomorphic sectional curvature \cite{Liu}. On the other hand it was known that such metric may not even have positive Ricci curvature \cite{Hitchin}.

The following result was proved by X. Yang in \cite{XYang} recently, which  answers affirmatively a question in \cite{YauP}.

\medskip

{\it If the compact K\"ahler manifold $M$ has positive holomorphic sectional curvature, then $M$ is
projective. Namely $M$ can be embedded into a complex projective space via a holomorphic map.}

\medskip

The key step is to show that the Hodge number $h^{2, 0}=0$. Then a well-known result of Kodaira  (cf. Chapter 3,
Theorem 8.3 of \cite{MK}) implies the projectiveness.

The purpose of this paper is to prove a generalization of the above result of Yang. First of all we introduce some
notations after recalling  a lemma of Berger.

\begin{lemma}\label{lemma:11}
If $S(p)=\sum_{i, j=1}^m R(E_i, \overline{E}_i, E_j, \overline{E}_j)$, where $\{E_i\}$ is a unitary basis of $T_p'M$,
denotes the scalar curvature of $M$, then
\begin{equation}\label{eq:11}
2S(p)= \frac{m(m+1)}{Vol(\mathbb{S}^{2m-1})}\int_{|Z|=1, Z\in T'_pM} H(Z)\, d\theta(Z).
\end{equation}
\end{lemma}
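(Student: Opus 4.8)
The plan is to evaluate the right-hand integral directly by expanding a unit vector in the unitary frame and integrating monomials in its coordinates over the sphere. Write $Z=\sum_{i=1}^m Z^i E_i$ with $\sum_i |Z^i|^2=1$, and abbreviate $R_{i\bar{j}k\bar{l}}:=R(E_i,\overline{E}_j,E_k,\overline{E}_l)$. Since $|Z|=1$, multilinearity of $R$ gives
\[
H(Z)=R(Z,\bar Z,Z,\bar Z)=\sum_{i,j,k,l} Z^i\,\overline{Z^j}\,Z^k\,\overline{Z^l}\; R_{i\bar{j}k\bar{l}}.
\]
Thus the problem reduces to computing the fourth-order moments $\int_{|Z|=1} Z^iZ^k\,\overline{Z^j}\,\overline{Z^l}\,d\theta(Z)$ and contracting them against the curvature components.

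The heart of the matter is the identity
\[
\frac{1}{Vol(\mathbb{S}^{2m-1})}\int_{|Z|=1} Z^iZ^k\,\overline{Z^j}\,\overline{Z^l}\,d\theta(Z)
=\frac{1}{m(m+1)}\lf(\delta_{ij}\delta_{kl}+\delta_{il}\delta_{kj}\ri).
\]
I would establish this by invariance. The left-hand side defines a $4$-tensor on $\mathbb{C}^m$ that is invariant under the unitary group $U(m)$ (which acts transitively on the sphere and preserves $d\theta$) and is symmetric separately in the pair $(i,k)$ and in the pair $(j,l)$; the only such invariant tensors are linear combinations of $\delta_{ij}\delta_{kl}$ and $\delta_{il}\delta_{kj}$, and these enter with a common coefficient $c$ by the symmetry in $(i,k)$ and $(j,l)$. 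The constant $c=\tfrac{1}{m(m+1)}$ is then pinned down by a single trace: setting $i=j$, $k=l$ and summing gives $\int_{|Z|=1}\big(\sum_i|Z^i|^2\big)^2 d\theta = Vol\cdot c\,(m^2+m)$, while the left-hand side is simply $Vol(\mathbb{S}^{2m-1})$. (Alternatively one computes the two representative moments $\int|Z^1|^4$ and $\int|Z^1|^2|Z^2|^2$ directly from the Gaussian/Beta-integral representation of the uniform measure.)

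With the moment formula in hand, I substitute and contract:
\[
\frac{1}{Vol(\mathbb{S}^{2m-1})}\int_{|Z|=1} H(Z)\,d\theta
=\frac{1}{m(m+1)}\sum_{i,j,k,l} R_{i\bar{j}k\bar{l}}\lf(\delta_{ij}\delta_{kl}+\delta_{il}\delta_{kj}\ri)
=\frac{1}{m(m+1)}\lf(\sum_{i,k} R_{i\bar{i}k\bar{k}}+\sum_{i,k} R_{i\bar{k}k\bar{i}}\ri).
\]
The first sum is exactly $S(p)$. For the second, the K\"ahler symmetry $R_{i\bar{j}k\bar{l}}=R_{k\bar{j}i\bar{l}}$ (interchanging the two holomorphic slots) gives $R_{i\bar{k}k\bar{i}}=R_{k\bar{k}i\bar{i}}$, so after relabeling $\sum_{i,k}R_{i\bar{k}k\bar{i}}=S(p)$ as well. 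Hence the average of $H$ equals $\frac{2S(p)}{m(m+1)}$, and clearing the denominator yields \eqref{eq:11}. The main obstacle is the moment identity, specifically fixing its constant; tracking the K\"ahler symmetries is a secondary but genuine point, while the remaining steps are routine bookkeeping.
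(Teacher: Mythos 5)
Your proof is correct and takes essentially the same route as the paper: expand $H(Z)$ in a unitary frame, integrate the fourth-order moments over the sphere, and use the K\"ahler symmetries to collect the two surviving contractions into $2S(p)$. The only difference is one of packaging---you derive the full moment tensor $\frac{1}{m(m+1)}\left(\delta_{ij}\delta_{kl}+\delta_{il}\delta_{kj}\right)$ by $U(m)$-invariance, which subsumes the paper's two directly computed moments $\int |z_i|^4$ and $\int |z_i|^2|z_j|^2$ and makes explicit the vanishing of the unpaired cross terms that the paper leaves implicit.
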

\begin{proof} Direct calculations shows that
$$
\frac{1}{Vol(\mathbb{S}^{2m-1})}\int_{\mathbb{S}^{2m-1}} |z_i|^4 =\frac{2}{m(m+1)}, \ \
\frac{1}{Vol(\mathbb{S}^{2m-1})}\int_{\mathbb{S}^{2m-1}} |z_i|^2|z_j|^2 =\frac{1}{m(m+1)}
$$
for each $i$ and each $i\neq j$. Equation (\ref{eq:11}) then follows by expanding $H(Z)$ in terms of $Z=\sum_i z_i E_{i}$, and  the above
formulae.
\end{proof}
For any integer $k$ with $1\le k \le m$ and any $k$-dimensional subspace $\Sigma\subset T_x'M$, one can defined the $k$-scalar
curvature as
$$
S_k(x, \Sigma)=\frac{k(k+1)}{2Vol(\mathbb{S}^{2k-1})}\int_{|Z|=1, Z\in \Sigma} H(Z)\, d\theta(Z).
$$
By the above Berger's lemma, $\{S_k(x, \Sigma)\}$ interpolate between the holomorphic sectional curvature, which is
$S_1(x, \{X\})$,  and scalar curvature, which is $S_m(x, T_xM)$.

We say that $(M, g)$ has positive 2nd-scalar curvature if $S_2(x, \Sigma)>0$ for any $x$ and any two complex plane
$\Sigma$.

Clearly, the positivity of the holomorphic sectional curvature implies the positivity of the 2nd-scalar curvature, and the positivity of $S_k$ implies the positivity of $S_l$ if $k\leq l$. We
shall prove the following generalization of above mentioned result of Yang.

\begin{theorem}\label{thm:11} Any compact K\"ahler manifold $M^m$ with positive 2nd-scalar curvature must be projective. In
fact $h^{2, 0}(M)=0$.
\end{theorem}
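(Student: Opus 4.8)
The plan is to prove the stronger vanishing $h^{p,0}(M)=0$ for $2\le p\le m$; granting the case $p=2$, the cited theorem of Kodaira yields projectivity, so the whole statement follows. A nonzero class in $H^{p,0}$ is represented by a holomorphic $p$-form $s$, equivalently a global holomorphic section of $\Omega^p_M=(\Lambda^p T'M)^*$. I would use the maximum-principle mechanism that a holomorphic section of $E^*$ cannot exist once $E$ is ``RC-positive'' (i.e. for every nonzero $\xi$ some direction $X$ makes the curvature positive). Looking at $|s|^2$ and its maximum point $x_0$, the complex Hessian is $\le 0$ there, so for every $X\in T'_{x_0}M$ the Bochner identity $\partial_X\partial_{\bar X}|s|^2=|\nabla_X s|^2-\langle\Theta^{\Omega^p}_{X\bar X}s,s\rangle$ forces $\langle\Theta^{\Omega^p}_{X\bar X}s,s\rangle\ge 0$; dualizing, this reads
$$\langle\Theta^{\Lambda^pT'M}_{X\bar X}\xi,\xi\rangle\le 0\qquad\text{for all }X,$$
where $0\ne\xi\in\Lambda^pT'_{x_0}M$ corresponds to $s$. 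Thus it suffices to show that positive $2$nd-scalar curvature makes $\Lambda^pT'M$ RC-positive, which contradicts the displayed inequality and kills $s$.

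The main computation reduces this to a pointwise curvature inequality. Since $\Theta^{\Lambda^pT'M}(X,\bar X)$ acts as a derivation, its expectation in the state $\xi$ is $\operatorname{tr}\!\big(\Theta^{T'M}(X,\bar X)\,P_\xi\big)$, where $P_\xi$ is the one-body (reduced) density operator of $\xi$ on $T'_{x_0}M$. Diagonalizing $P_\xi=\sum_\gamma\lambda_\gamma\,v_\gamma v_\gamma^*$ in a unitary frame (and normalizing $|\xi|=1$) gives $\langle\Theta^{\Lambda^pT'M}_{X\bar X}\xi,\xi\rangle=\sum_\gamma\lambda_\gamma R(X,\bar X,v_\gamma,\bar v_\gamma)$, with $0<\lambda_\gamma\le 1$ and $\sum_\gamma\lambda_\gamma=p$; the upper bound $\lambda_\gamma\le 1$ is the decisive structural constraint. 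In the decomposable case $\xi=v_1\wedge\cdots\wedge v_p$ (so $P_\xi$ is the projection onto $V=\operatorname{span}(v_1,\dots,v_p)$) the argument is immediate: testing $X=v_1,\dots,v_p$ and summing yields
$$\sum_{t=1}^p\langle\Theta^{\Lambda^pT'M}_{v_t\bar v_t}\xi,\xi\rangle=\sum_{s,t=1}^p R(v_t,\bar v_t,v_s,\bar v_s),$$
which by Berger's Lemma \ref{lemma:11} applied to the $p$-plane $V$ equals $S_p(x_0,V)$, positive because $S_2>0$ implies $S_p>0$. Hence some summand is positive, the desired contradiction.

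The crux is the general, non-decomposable $\xi$, where the weights $\lambda_\gamma$ are unequal and the holomorphic sectional curvatures $H(v_\gamma)=R(v_\gamma,\bar v_\gamma,v_\gamma,\bar v_\gamma)$ enter with uncontrolled sign. Here I would exploit that $(\lambda_1,\dots,\lambda_r)$ lies in the hypersimplex $\{0\le\lambda_\gamma\le 1,\ \sum_\gamma\lambda_\gamma=p\}$, whose vertices are the $0$–$1$ vectors with exactly $p$ ones; consequently $P_\xi=\sum_A c_A\,\Pi_{V_A}$ is a convex combination of the projections onto the coordinate $p$-planes $V_A=\operatorname{span}(v_\gamma:\gamma\in A)$, and $\langle\Theta^{\Lambda^pT'M}_{X\bar X}\xi,\xi\rangle$ becomes a convex average of the decomposable quantities just treated. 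The aim is to combine the family of inequalities $\sum_{\gamma\in A}\langle\Theta^{\Lambda^pT'M}_{v_\gamma\bar v_\gamma}\xi,\xi\rangle\le 0$, valid for each $p$-subset $A$, with the positivity of $S(V_A)$ and of the scalar curvature $S(W)$ of the support $W=\operatorname{range}(P_\xi)$ (all coming from $S_2>0$ via Berger's Lemma), arranging the averaging so that the uncontrolled $H(v_\gamma)$-terms cancel, which is precisely where $\lambda_\gamma\le 1$ is used. I expect this rearrangement, passing from the clean decomposable estimate to arbitrary $\xi$, to be the principal technical obstacle; the reduction to a pointwise statement and the decomposable case are routine once Berger's Lemma is in hand.
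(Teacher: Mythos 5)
Your reduction to a pointwise statement and your decomposable case are correct and consistent with the paper's framework: at the maximum point $x_0$ of $|s|^2$ the Bochner identity forces $\sum_\gamma\lambda_\gamma R(X,\bar X,v_\gamma,\bar v_\gamma)\le 0$ for every $X$, and when $\xi$ is decomposable, testing $X=v_1,\dots,v_p$ and summing gives $S_p(x_0,V)>0$ by Berger's lemma (using that $S_2>0$ implies $S_p>0$), a contradiction. But the non-decomposable case, which you defer as the ``principal technical obstacle,'' is not a technicality --- it is the entire theorem. Already for $p=2$, Hua's normal form (used in the paper) shows that at a point a holomorphic $2$-form is a sum of decomposable blocks supported on mutually orthogonal planes, e.g.\ $\lambda_1\varphi_1\wedge\varphi_2+\lambda_2\varphi_3\wedge\varphi_4$, and such a form is decomposable only when a single $\lambda_i$ is nonzero. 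So your argument as written rules out only forms that happen to have rank one at the maximum point; the vanishing $h^{2,0}=0$, hence projectivity, is not established.

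Moreover, the strategy you sketch for general $\xi$ cannot close in the form proposed. Writing $P_\xi=\sum_A c_A\Pi_{V_A}$ over hypersimplex vertices and combining the inequalities $\sum_{\gamma\in B}\langle\Theta_{v_\gamma\bar v_\gamma}\xi,\xi\rangle\le 0$ with any nonnegative weights produces expressions of the form $\sum_{A,B}c_Ac_B\,Q(A,B)\le 0$, where $Q(A,B)=\sum_{\delta\in A}\sum_{\gamma\in B}R(v_\delta,\bar v_\delta,v_\gamma,\bar v_\gamma)$. The diagonal terms $Q(A,A)=S_p(x_0,V_A)$ are positive, but the off-diagonal terms are sums of bisectional-type curvatures between distinct coordinate planes, and these are \emph{not} sign-controlled by $S_2>0$ (compare the example of Tam cited in the paper's bibliography: positive holomorphic sectional curvature can coexist with negative bisectional curvature). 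No convex averaging of the quantities you have listed eliminates these cross terms. The paper's proof supplies exactly the missing mechanism, and it is of a different nature from anything in your plan: one fixes a $2$-plane $\Sigma$ \emph{minimizing} $S_2(x_0,\cdot)$, averages the Bochner inequality over the unit sphere of $\Sigma$ (a test set chosen independently of $s$, not among eigenvectors of $P_\xi$), and exploits the first- and second-derivative tests for $f(t)=\aint H(e^{ta}X)\,d\theta(X)$, $a\in\mathfrak{u}(m)$, at the minimum $t=0$. This yields Proposition \ref{prop:21}: the averaged cross terms $\aint R(E,\bar E',Z,\bar Z)\,d\theta(Z)$ vanish for $E\in\Sigma$, $E'\perp\Sigma$, and the averaged curvatures $\aint R(E',\bar E',Z,\bar Z)\,d\theta(Z)$ in directions \emph{orthogonal} to $\Sigma$ are bounded below by $\tfrac16 S_2(x_0,\Sigma)$ --- transverse lower bounds that do not follow from Berger's lemma applied to any family of planes. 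Combined with the normal form of $s$ and a singular-value decomposition of each block relative to $\Sigma$, these estimates control precisely the mixed terms your approach leaves uncontrolled. Without a counterpart to this variational input at an extremal plane, your argument stalls exactly where you predicted it would.
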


Recall that a projective manifold $M$ is said to be {\em rationally connected}, if any two generic points in it can be connected by a chain of rational curves. By the work of \cite{KMM}, any projective manifold $M$ admits a rational map $f: M \dashrightarrow Z$ onto a projective manifold $Z$ such that any generic fiber is  rationally connected, and for any very general point (meaning away from a countable union of proper subvarieties) $z\in Z$, any rational curve in $M$ which intersects the fiber $f^{-1}(z)$ must be contained in that fiber. Such a map is called a {\em maximal rationally connected fibration} for $M$, or {\em MRC fibration} for short. It is unique up to birational equivalence. The dimension of the fiber of a MRC fibration of $M$ is called the {\em rational dimension} of $M$, denoted by $rd(M)$.

Heier and Wong (Theorem 1.7 of \cite{HeierWong}) proved that any projective manifold $M^m$ with $S_k>0$ satisfies $rd(M)\geq m-(k-1)$. So as  a corollary of their result and Theorem 1.1 above, we have the following consequence.

\begin{corollary}\label{coro:11} Let $M^m$ be a compact K\"ahler manifold with positive 2nd scalar curvature. Then $rd(M)\geq m-1$, namely, either $M$ is rationally connected, or there is a rational map $f: M\dashrightarrow C$ from $M$  onto a curve $C$ of positive genus, such that over the complement of a finite subset of C, f is a holomorphic submersion with compact, smooth fibers, each fiber is a rationally connected manifold.
\end{corollary}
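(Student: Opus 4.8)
The curvature hypothesis has already done its work once we invoke Theorem~\ref{thm:11}, so the plan is purely to combine it with the theorem of Heier and Wong and then to analyse the resulting maximal rationally connected fibration. First, $S_2>0$ forces $h^{2,0}(M)=0$ by Theorem~\ref{thm:11}, hence $M$ is projective and the apparatus of \cite{KMM} and \cite{HeierWong} applies. Taking $k=2$ in Theorem~1.7 of \cite{HeierWong} gives $rd(M)\ge m-(2-1)=m-1$. Let $f\colon M\dashrightarrow Z$ be an MRC fibration, with general fibre rationally connected of dimension $rd(M)$, so $\dim Z=m-rd(M)\in\{0,1\}$. If $\dim Z=0$ then the general fibre is $M$ itself and $M$ is rationally connected; this is the first alternative. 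So assume $\dim Z=1$ and let $C$ be the smooth projective model of $Z$, a curve.

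I would first establish that $C$ has positive genus, which I expect to be the crux. Choose a smooth projective $\widetilde M$ with a birational morphism $\pi\colon\widetilde M\to M$ resolving the indeterminacy of $f$, so that $\widetilde f=f\circ\pi\colon\widetilde M\to C$ is a morphism whose general fibre is still rationally connected. If $g(C)=0$, i.e. $C\cong\mathbb{P}^1$, then both the base and the general fibre of $\widetilde f$ are rationally connected, so by the theorem of Graber, Harris and Starr the total space $\widetilde M$ is rationally connected; as rational connectedness is a birational invariant of smooth projective varieties, $M$ would be rationally connected and $rd(M)=m$, contradicting $rd(M)=m-1$. Hence $g(C)\ge 1$.

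Positive genus now forces $f$ to be an honest morphism. Indeed, if $f$ had an indeterminacy point $x$, then on the graph $\Gamma\subset M\times C$ the fibre of $\Gamma\to M$ over $x$ would be a positive-dimensional subvariety of $\{x\}\times C$, hence all of $C$; the resulting rational curve dominating $C$ would make $C\cong\mathbb{P}^1$, against $g(C)\ge 1$. Thus $f\colon M\to C$ is a surjective morphism of smooth projective varieties. By generic smoothness in characteristic zero there is a finite set $B\subset C$ with $f$ smooth over $C\setminus B$; over $C\setminus B$ the map $f$ is therefore a proper holomorphic submersion with smooth compact fibres of dimension $m-1$. These fibres are the general fibres of the MRC fibration, so — after enlarging $B$ by finitely many points if necessary, using that rational connectedness is an open property in smooth projective families in characteristic zero — each of them is rationally connected. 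This is exactly the second alternative, completing the proof.

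The entire difficulty is concentrated in the birational–geometric step of the previous two paragraphs: extracting positive genus of the base from the Graber–Harris–Starr theorem, and then using that positivity both to rule out indeterminacy and to promote the abstract MRC data to a genuine submersion with compact rationally connected fibres. The metric geometry enters only indirectly, through Theorem~\ref{thm:11} (projectivity) and through the bound $rd(M)\ge m-1$ supplied by \cite{HeierWong}.
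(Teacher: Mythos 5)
Your proposal is correct and follows essentially the same route as the paper: the paper obtains the corollary in one line by combining Theorem~\ref{thm:11} (projectivity from $S_2>0$) with Theorem 1.7 of \cite{HeierWong} (which gives $rd(M)\ge m-(k-1)$, so $rd(M)\ge m-1$ for $k=2$), leaving the structural description of the MRC fibration -- positive genus of the base via Graber--Harris--Starr, extension of $f$ to a morphism, generic smoothness, and deformation invariance of rational connectedness -- implicit as standard birational geometry, which is exactly the material you spell out. The one imprecise point is your graph argument for the morphism extension: the positive-dimensional fiber of $\Gamma\to M$ over an indeterminacy point is a copy of $C$ itself (which has positive genus, so is not a rational curve), and to produce a rational curve dominating $C$ one should instead pass to a resolution of indeterminacy and use that its exceptional fibers over the smooth variety $M$ are covered by rational curves, or simply cite the standard fact that a rational map from a smooth projective variety to a proper variety containing no rational curves is a morphism (alternatively, factor through the Albanese).
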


Note that the intrinsic criterion of the 2nd scalar curvature can be used to imply that all compact Riemann surfaces (by taking a product with a very positive $\mathbb{P}^1$) are projective while Yang's result (under the positivity of holomorphic sectional curvature) can only be applied to $\mathbb{P}^1$. In the mean time a generic two complex dimensional tori is not algebraic. Hence the projectivity can NOT be possibly implied by the positivity of $S_k$ with $k\ge 3$ (taking the product of a non-algebraic tori of complex dimension 2 with a very positive $\mathbb{P}^1$ one can endow a K\"ahler metric with $S_k>0$ for $k\ge 3$ on such a non-algebraic manifold). In view of these examples our result is sharp in some sense. Moreover the positivity of $S_2$ is stable (namely a open condition) under the holomorphic  deformation of the complex manifolds (along with the smoothly deformation of the K\"ahler metrics specified by Kodaria-Spencer \cite{MK}). Hence our result provides a condition invariant under the small deformation of holomorphic structure. On the other hand, there are celebrated examples of Voisin \cite{Voi} of K\"ahler manifolds of complex dimension four and above, which can not be deformed into an algebraic one via a complex holomorphic deformation, and the wildly open Kodaira's problem in complex dimension three asking whether or not a K\"ahler threefold can be deformed into a projective manifold.

It is well known that $h^{m, 0}=0$ if $(M^m, g)$ has positive scalar curvature. The traditional Bochner formula also implies the vanishing of $h^{p, 0}=0$ for $k\leq p \leq  m$ if the Ricci curvature of $(M^m, g)$ is $k$-positive, namely the sum of the smallest $k$ eigenvalues of the Ricci tensor is positive (cf. \cite{Ko2}). The following result also holds.

\begin{theorem} \label{thm:12} Let $(M^m, g)$ be a compact K\"ahler manifold. If the $k$-th scalar curvature is positive, then $h^{p, 0}=0$ for any $k\leq p\leq m$.
\end{theorem}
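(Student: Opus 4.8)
My plan is to follow the positivity-of-curvature (maximum principle) method of \cite{XYang}, realizing the $k$-scalar curvature as an averaged holomorphic sectional curvature through Lemma \ref{lemma:11}. On a compact K\"ahler manifold, Hodge theory identifies $h^{p,0}(M)$ with $\dim H^0(M,\Omega^p)$, the space of holomorphic $p$-forms, so it suffices to show that every holomorphic $p$-form $\varphi$ vanishes identically whenever $p\ge k$ and $S_k>0$. Since positivity of $S_k$ forces positivity of $S_p$ for every $p\ge k$ (as recorded in the excerpt), I may assume $S_p>0$ from the outset and argue purely in degree $p$.

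First I would set up the Bochner inequality at a maximum point. Viewing $\varphi$ as a holomorphic section of the Hermitian holomorphic bundle $\Omega^p=\wedge^p (T'M)^*$ with the metric induced by $g$, the Chern connection gives, in a unitary frame $\{E_a\}$ of $T'_{x_0}M$,
\begin{equation*}
\p_k\p_{\bar l}|\varphi|^2=\langle\nabla_k\varphi,\nabla_l\varphi\rangle-\langle\Theta(\Omega^p)_{k\bar l}\varphi,\varphi\rangle .
\end{equation*}
If $\varphi\not\equiv0$, pick a point $x_0$ where $|\varphi|^2$ attains its positive maximum; contracting the negative semidefinite complex Hessian at $x_0$ with $\xi^k\bar\xi^l$ yields
\begin{equation*}
\langle\Theta(\Omega^p)(\xi,\bar\xi)\varphi,\varphi\rangle\ \ge\ |\nabla_\xi\varphi|^2\ \ge\ 0\qquad\text{for every }\xi\in T'_{x_0}M .
\end{equation*}
The entire proof then reduces to contradicting this, i.e. to producing a single direction $\xi$ with $\langle\Theta(\Omega^p)(\xi,\bar\xi)\varphi,\varphi\rangle<0$.

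To locate such a $\xi$ I would compute the curvature of $\Omega^p$ explicitly. Writing the self-contraction $T_{a\bar b}=\sum_{I'}\varphi_{aI'}\overline{\varphi_{bI'}}$, a positive semidefinite Hermitian endomorphism with $\operatorname{tr}T=p|\varphi|^2$ supported on the smallest subspace $W\subseteq T'_{x_0}M$ carrying $\varphi$, one gets
\begin{equation*}
\langle\Theta(\Omega^p)(\xi,\bar\xi)\varphi,\varphi\rangle=-\sum_{a,b}R(E_a,\overline{E}_b,\xi,\bar\xi)\,T_{a\bar b},
\end{equation*}
so it suffices to find $\xi$ with $\sum_{a,b}R(E_a,\overline{E}_b,\xi,\bar\xi)T_{a\bar b}>0$. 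The key move is to restrict the search to $\xi\in W$ and to \emph{average}: using the integrals of $|z_i|^2|z_j|^2$ from the proof of Lemma \ref{lemma:11}, the mean of $\sum_{a,b}R(E_a,\overline E_b,\xi,\bar\xi)T_{a\bar b}$ over the unit sphere of $W$ equals a positive multiple of $\operatorname{tr}(\Ric_W\cdot T)$, where $\Ric_W(E_a,\overline E_b)=\sum_{E_k\in W}R(E_a,\overline E_b,E_k,\overline E_k)$ is the partial Ricci of $W$. When $\varphi$ is decomposable, $W$ is the $p$-plane $\Sigma$ spanned by its coframe directions, $T$ is the identity on $\Sigma$, and this average is precisely $\tfrac1p S_p(x_0,\Sigma)>0$; hence some $\xi\in\Sigma$ makes $\langle\Theta(\Omega^p)(\xi,\bar\xi)\varphi,\varphi\rangle<0$, contradicting the maximum principle and forcing $\varphi\equiv0$.

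The main obstacle is the general, non-decomposable $\varphi$, where $\dim W>p$ and $T$ is not a multiple of the identity, so the uniform average only produces $\operatorname{tr}(\Ric_W\cdot T)$ rather than a manifestly positive combination of $S_p$-values. Here I expect to have to choose $\xi$ adapted to the spectral structure of $T$ — in effect distributing the search over the coordinate $p$-planes singled out by $\varphi$ and applying the averaging on each — while controlling the cross terms $R(E_a,\overline E_a,E_k,\overline E_k)$ with $E_a,E_k\in W$ coming from distinct planes. Showing that positivity of $S_p$ on \emph{all} $p$-dimensional subspaces is enough to guarantee $\langle\Theta(\Omega^p)(\xi,\bar\xi)\varphi,\varphi\rangle<0$ for some $\xi$ and arbitrary $\varphi$ is the technical heart of the argument; the decomposable case and the top degree $p=m$, where the statement collapses to the classical fact that positive scalar curvature $S_m$ annihilates $h^{m,0}$, serve as guiding consistency checks.
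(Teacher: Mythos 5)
Your general framework (Bochner formula for $\Omega^p$, maximum principle at the peak of $|\varphi|^2$, averaging over a sphere to bring in the $k$-scalar curvature) is the same family of argument as the paper's, and your decomposable case is correct. But the proposal has a genuine gap exactly where you flag it: the non-decomposable case is not a technical remainder, it is the entire content of the theorem, and your choice of averaging subspace cannot close it. Averaging $\xi$ over the support $W$ of $\varphi$ produces a multiple of $\operatorname{tr}(\Ric_W\cdot T)$; since the coefficients $|a_{I_p}|^2$ are nonnegative, what you actually need is that for each coordinate $p$-plane $P\subset W$ (in the frame diagonalizing the averaged curvature form) one has $\sum_{a\in P}\Ric_W(E_a,\overline{E}_a)>0$. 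This quantity contains bisectional-type cross terms $R(E_a,\overline{E}_a,E_j,\overline{E}_j)$ with $a\in P$, $j\in W\setminus P$, and positivity of $S_p$ on every individual $p$-plane gives no sign information whatsoever about such terms. Adapting $\xi$ to the spectral structure of $T$ does not help, because nothing in your setup constrains curvatures in directions transverse to a given $p$-plane.

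The paper's key idea, absent from your proposal, is to average not over the support of $\varphi$ but over a $k$-plane $\Sigma$ chosen to \emph{minimize} $S_k(x_0,\cdot)$ over the Grassmannian, and to extract the missing sign information from this extremality: for $a\in\mathfrak{u}(m)$ the function $f(t)=\aint H(e^{ta}X)\,d\theta(X)$ satisfies $f'(0)=0$ and $f''(0)\ge 0$, and suitable choices of $a$ (built from one vector in $\Sigma$ and one orthogonal to it) yield Proposition 3.1: the averaged mixed terms $\aint R(E,\overline{E}',Z,\overline{Z})\,d\theta(Z)$ vanish for $E\in\Sigma$, $E'\perp\Sigma$, while $\aint R(E',\overline{E}',Z,\overline{Z})\,d\theta(Z)$ and the relevant block sums are bounded below by $S_k(x_0,\Sigma)/(k(k+1))$. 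These inequalities are precisely the control on transverse directions that your approach lacks. The proof then writes each diagonalizing frame vector as $\mu_i E_i+\beta_i E_i'$ with $E_i\in\Sigma$, $E_i'\perp\Sigma$ (singular value decomposition) and telescopes the Proposition 3.1 inequalities to get positivity of each block average. Without a variational mechanism of this kind, the step you call the ``technical heart'' cannot be completed from your premises: the positivity hypothesis simply does not see the cross terms your averaging produces.
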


It turns out that the original argument of proving the above result contains an error. However it can be proved using a maximum principle consideration via the co-mass (a $L^\infty$-norm) of differential forms. Please see \cite{Ni-19} Proposition 4.2 and Corollary 4.3 for details.

As a counterpart to Theorem 1.7 of \cite{HeierWong}, one can ask the question that, for a given projective K\"ahler manifold $M^m$ with $S_k<0$, what is the maximal possible rational dimension?   A naive conjecture which mimics the Heier-Wong's Theorem would be: $ S_k<0 \ \ \Longrightarrow \ \ rd(M)<k$.
For $k=m$, the conjecture says that having negative scalar curvature would imply the manifold cannot be rationally connected. This is still unknown even for $m=2$ as far as we know. (Masataka Iwai \cite{Iwai} shared an example of complex surface with a {\it Hermitian metric} of negative scalar curvature which is rationally connected.) On the other hand, $S_m<0$ (or just the integral of the scalar curvature being negative) does imply that $H^0(M, K_M^{-\otimes \ell})=0$ for any $\ell >0$, where $K_M^{-1}$ is  the anti-canonical line bundle, so $M$ cannot be a Fano manifold when $S_k<0$ for any $k$. Note also  that a recent result in \cite{Ni-ijm} (cf. Theorem 5.1) implies that any holomorphic map from $\mathbb{P}^2$ or a two dimensional tori into a K\"ahler manifold $M^m$ (not necessarily compact) with $S_2<0$  is either constant or of rank one.

We should mention that there is also a recent work of Wu and Yau \cite{WuYau} on the ampleness of the canonical line bundle assuming the holomorphic sectional curvature being negative, which is another perfect example of getting algebraic geometric consequence in terms of the metric property via the holomorphic sectional curvature.

Generally speaking, we think it is interesting to obtain algebraic geometric characterizations of  condition $S_k>0$ or $S_k<0$, as well as the conditions $Ric^\perp>0$, $Ric^\perp <0$ studied recently in \cite{NZ} by the authors, where an complementary metric criterion of the projectivity was given in terms of $Ric^\perp_2>0$. A complete classification result for threefolds and a partial classification of fourfolds have been obtained (cf. \cite{NZ-19}) for K\"ahler manifolds with $Ric^\perp>0$.
The estimates developed in the proof of this paper have also been useful  \cite{Ni-19} in proving the rational-connectedness of K\"ahler manifolds with $Ric_k>0$. We refer the interested readers to \cite{Ni-19} for these and other notions of curvature positivities as well as many related results and questions.

\section{The projectivity of $M$ with positive $S_2$}

Here we adopt the argument of \cite{NZ} to show that the dimension of $\mathcal{H}^{2, 0}(M)$, the space harmonic $(2, 0$-forms, $h^{2, 0}(M)=0$. Then Theorem 8.3 of \cite{MK} implies that $M$ is projective.

First recall the formula below (cf. Ch III, Proposition 1.5 of \cite{Ko2}, as well as Proposition 2.1 of
\cite{Ni-JDG}).

\begin{lemma}\label{lemma:21} Let $s$ be a global holomorphic $p$-form on $M^m$ which locally is expressed as  $s=\frac{1}{p!}\sum_{I_p} a_{I_p}dz^{i_1}\wedge \cdots \wedge dz^{i_p}$ ,  where $I_p=(i_1, \cdots, i_p)$. Then
  $$\partial \overline{\partial } \,|s|^2 = \langle \nabla s , \overline{\nabla s} \rangle  - \widetilde{R}(s,
  \overline{s}, \cdot , \cdot ) $$
where $\widetilde{R}$ stands for the curvature of the Hermitian bundle $\bigwedge^p\Omega$, where $\Omega=(T'M)^*$ is
the holomorphic cotangent bundle of $M$. The metric on $\bigwedge^p\Omega$ is derived from the metric of $M^m$. Then for any unitary frame $\{dz^j\}$,
\begin{equation}\label{eq:20}
\langle \sqrt{-1}\partial\bar{\partial} |s|^2, \frac{1}{\sqrt{-1}}v\wedge \bar{v}\rangle =\langle \nabla_v s,
\bar{\nabla}_{\bar{v}} \bar{s}\rangle +\frac{1}{p!}\sum_{I_p} \sum_{k=1}^p  \sum_{l=1}^m \langle R_{v\bar{v}i_k \bar{l}}a_{I_p}, \overline{a_{i_1\cdots(l)_k\cdots i_p}}\rangle.
\end{equation}
 Given any $x_0$ and $v\in T_{x_0}'M$, there exists a unitary frame $\{dz^i\}$ at $x_0$, which may depends on $v$, such that
\begin{equation}\label{eq:21}
\langle \sqrt{-1}\partial\bar{\partial} |s|^2, \frac{1}{\sqrt{-1}}v\wedge \bar{v}\rangle =\langle \nabla_v s,
\bar{\nabla}_{\bar{v}} \bar{s}\rangle +\frac{1}{p!}\sum_{I_p} \sum_{k=1}^p R_{v\bar{v}i_k \bar{i}_k}|a_{I_p}|^2.
\end{equation}
\end{lemma}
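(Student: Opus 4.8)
The plan is to regard $s$ as a holomorphic section of the Hermitian holomorphic vector bundle $E=\bigwedge^p\Omega$, run the standard curvature computation for $\partial\bar\partial$ of the squared norm of a holomorphic section to get the first (coordinate-free) identity, and then unwind the induced curvature $\widetilde R$ on $E$ in terms of the curvature $R$ of $M$ to obtain (\ref{eq:20}) and (\ref{eq:21}).

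First I would establish the first identity. Let $\nabla=\nabla'+\bar\partial$ be the Chern connection of $E$. Metric compatibility gives $\bar\partial\langle s,\bar s\rangle=\langle\bar\partial s,\bar s\rangle+\langle s,\overline{\nabla' s}\rangle$, and since $s$ is holomorphic $\bar\partial s=0$, so $\bar\partial|s|^2=\langle s,\overline{\nabla s}\rangle$. Applying $\partial$ and invoking compatibility a second time produces the Weitzenb\"ock-type identity $\partial\bar\partial|s|^2=\langle\nabla s,\overline{\nabla s}\rangle-\widetilde R(s,\bar s,\cdot,\cdot)$, where the curvature term is exactly what survives when $\partial\bar\partial$ falls on the metric coefficients $h_{a\bar b}$ of $E$ (equivalently, it is $\langle(\nabla''\nabla'+\nabla'\nabla'')s,\bar s\rangle$, using $\nabla'' s=\bar\partial s=0$). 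This is the classical Bochner--Kodaira computation cited from Kobayashi.

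Next I would pass to the frame formula. Pairing the $(1,1)$-form $\sqrt{-1}\partial\bar\partial|s|^2$ against $\frac{1}{\sqrt{-1}}v\wedge\bar v$ turns the gradient term into $\langle\nabla_v s,\bar\nabla_{\bar v}\bar s\rangle$. For the curvature term I would use that $\Omega=(T'M)^*$ carries the dual connection, whose curvature is the negative transpose of that of $T'M$, and that the curvature of an exterior power acts as a derivation on decomposable elements. Thus, in a unitary coframe $\{dz^i\}$, the endomorphism $\widetilde R(v,\bar v)$ replaces each factor $dz^{i_k}$ of $s$ in turn by $\sum_l R_{v\bar v i_k\bar l}\,dz^l$, the dual-bundle sign flip and the overall minus sign in the first identity combining to the $+$ in (\ref{eq:20}). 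Writing $s=\frac{1}{p!}\sum_{I_p}a_{I_p}\,dz^{i_1}\wedge\cdots\wedge dz^{i_p}$ and contracting yields precisely the curvature term of (\ref{eq:20}), with the index string $a_{i_1\cdots(l)_k\cdots i_p}$ recording the substitution of $i_k$ by $l$ in the $k$-th slot.

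Finally, to obtain (\ref{eq:21}) I would diagonalize. For a fixed $x_0$ and a fixed $v$, the matrix $\big(R_{v\bar v k\bar l}\big)_{k,l}$ is Hermitian, so a unitary change of frame $\{E_i\}$ depending on $v$ makes it diagonal, i.e. $R_{v\bar v i_k\bar l}=R_{v\bar v i_k\bar i_k}\,\delta_{i_k l}$. In that frame the inner double sum over $k$ and $l$ collapses to the terms with $l=i_k$, where $a_{i_1\cdots(l)_k\cdots i_p}=a_{I_p}$, giving $\frac{1}{p!}\sum_{I_p}\sum_{k=1}^p R_{v\bar v i_k\bar i_k}|a_{I_p}|^2$. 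The steps needing the most care are the bookkeeping of signs and conjugations in passing from the curvature of $T'M$ to that of $\Omega$ and then of $\bigwedge^p\Omega$ (so that the residual sign is genuinely $+$), together with the combinatorial factor $1/p!$ from the antisymmetrization; the diagonalization itself is immediate but can only be arranged for one $v$ at a time, which is the reason the frame in (\ref{eq:21}) is permitted to depend on $v$.
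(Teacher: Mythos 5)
Your proposal is correct and follows essentially the same route as the paper, which does not reprove this lemma but cites it from the standard references (Kobayashi, Ch.~III, Prop.~1.5, and Ni's vanishing-theorem paper): the Bochner--Kodaira identity for a holomorphic section of the Hermitian bundle $\bigwedge^p\Omega$, the derivation/dual-bundle expansion of $\widetilde{R}$ in terms of the curvature of $T'M$, and the diagonalization of the Hermitian matrix $\bigl(R_{v\bar{v}k\bar{l}}\bigr)$ for a fixed $v$ to pass from the double-sum formula to the diagonal one. Your flagged care points (the sign flip from dualizing, and the fact that the diagonalizing frame can only be chosen per $v$) are exactly the right ones, and your treatment of them is sound.
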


We prove the result by contradiction argument. Assume that $\mathcal{H}^{2, 0}(M)\ne \{0\}.$
 Let $s\in \mathcal{H}^{2, 0}(M)$ be a nonzero harmonic form. It is well-known that it is holomorphic. Write $s=\sum_{i,j} f_{ij}\varphi_i\wedge \varphi_j$ under any unitary coframe $\{\varphi_j\}$ which is dual to a local unitary tangent frame $\{\frac{\partial}{\partial z_j}\}$. The $m\times m$ matrix  $A= (f_{ij})$ is  skew-symmetric.  Note that there exists a normal form for any holomorphic $(2, 0)$-form $s$ at a given point $x_0$, (cf. Corollary 4.4.19 of \cite{HJ}). More precisely, given any skew-symmetric matrix $A$, there exists a unitary matrix $U$ such that $\ ^t\!U AU$ is in the block diagonal form where each non-zero diagonal block is a constant multiple of $F$, with
$$ F=\left[ \begin{array}{cc} 0 & 1 \\ -1 & 0 \end{array} \right]. $$
In other words, we can choose a unitary coframe $\varphi$ at $x_0$ such that
$$ s = \lambda_1 \varphi_1\wedge \varphi_2 + \lambda_2 \varphi_3 \wedge \varphi_4 + \cdots + \lambda_k \varphi_{2k-1}\wedge \varphi_{2k}, $$
where $k$ is a positive integer and each $\lambda_i\neq 0$ for $1\le i\le k$.

Suppose $k$ is the unique positive integer such that  $s^{k+1}=0$  while $s^k$ is not identically zero, and consider the holomorphic $2k$-form $\sigma=s^k$. By the argument on p151 of \cite{NZ}, we know that $\sigma=\lambda \varphi_1\wedge \cdots\wedge \varphi_{2k}\ne 0$. Now we apply Lemma 2.1 to $\sigma$ at the point $x_0$, where $|\sigma|^2$ attains its maximum and have that
$$
0\ge \langle \sqrt{-1}\partial\bar{\partial} |\sigma |^2, \frac{1}{\sqrt{-1}}v\wedge \bar{v}\rangle \ge |\lambda|^2 \sum_{i=1}^{2k} R_{i\bar{i}v\bar{v}},
$$
for any $v$. Taking $v=\frac{\partial}{\partial z^i}$, the dual of $\varphi_i$ at $x_0$ and sum them over we have that  at $x_0$
\begin{equation}\label{eq:contra}
\sum_{i, j=1}^{2k} R_{i\bar{i}j\bar{j}}\le 0
\end{equation}
On the other hand, it is easy to see that $S_2>0$ implies that
$S_{2k}>0$.  This is a contradiction to (\ref{eq:contra}). Hence there is no nonzero $s\in \mathcal{H}^{2, 0}(M)$.

In \cite{Ni-19}, via a different technique,  the result has been extended to K\"ahler manifolds with so-called RC-2 positivity, namely for any two unitary vectors  $\{E_1, E_2\}$, there exists $v$ such that $R(v, \bar{v}, E_1, \overline{E}_1)+R(v, \bar{v}, E_2, \overline{E}_2)>0$.

\section{Some related estimates}

Let $\Sigma$ be a $2$-plane where $S_2(x_0, \Sigma)=\inf_{\Sigma'} S_2(x_0, \Sigma')$, integrating the Bochner formula of Lemma \ref{lemma:21} for $v\in \mathbb{S}^3\subset \Sigma$, we have
\begin{equation}\label{eq:22}
\aint \partial_{v}\bar{\partial}_{\bar{v}} |s|^2=\aint \langle \nabla_v s, \bar{\nabla}_{\bar{v}} \bar{s}\rangle +\sum_{i=1}^k |a_{ij}|^2
\aint (R_{v\bar{v} i\,\overline{i}}+  R_{v\bar{v}j\, \overline{j}} ).
\end{equation}
Here $\aint f(Z)$ denote the average of the integral of the function  $f$ over $\mathbb{S}^3\subset \Sigma$.
We also have choose a unitary frame of $T_{x_0}$ such that $\aint R(v, \bar{v}, \cdot, \overline{(\cdot)})$ is diagonalized and $s$ is a holomorphic $2$-form given by  $s=\sum_{i\ne j}a_{ij}dz^i\wedge dz^j$.

A possible alternate approach to Theorem \ref{thm:11} is to apply the maximum principle at $x_0$, where $|s|^2$ attains its maximum. In
view of the compactness of the Grassmannians  we can also find a complex two plane $\Sigma$ in $T_{x_0}'M$ such that $S_2(x_0,
\Sigma)=\inf_{\Sigma'}S_2(x_0, \Sigma')>0$. We prove the following estimates, some of which were used in establishing the rational connectedness of algebraic manifolds under the $\Ric_k>0$ condition in \cite{Ni-19}.

\begin{proposition}\label{prop:21} For  any $E\in \Sigma$ and $E'\perp \Sigma$ with $|E|=|E'|=1$ and a two-dimensional plane $\Sigma'\subset T_p'M$ with $\Sigma'\ne \Sigma$ and a unitary frame $\{v_1, v_2\}$ of $\Sigma'$, we have that
\begin{eqnarray}
\aint R(E,\bar{E}', Z, \bar{Z})d\theta(Z) & = & \aint R(E', \bar{E}, Z, \bar{Z})d\theta(Z)\ = \ 0, \label{eq:1st} \\
\aint R(v_1, \overline{v}_1, Z, \bar{Z})+R(v_2, \overline{v}_2, Z, \bar{Z})\, d\theta(Z) &\ge& \frac{1}{3} S_2(x_0, \Sigma)+\frac{|\mu_1|^2+|\mu_2|^2}{12} S_2(x_0, \Sigma) +\nonumber\\ &\,&+\ \frac{|\mu_1|^2-|\mu_2|^2}{4}(R_{1\bar{1}1\bar{1}}-R_{2\bar{2}2\bar{2}}), \label{eq:23}\\
\aint R(E',\bar{E}', Z,   \bar{Z})\, d\theta(Z)&\ge& \frac{1}{6} S_2(x_0,\Sigma). \label{eq:24}
\end{eqnarray}
Here $\mu_1, \mu_2$ are the singular values of the projection $P$ from $\Sigma'$ to $\Sigma$, and $\{E_1, E_2\}$ is a unitary basis of $\Sigma$ such that $Pv_1=\mu_1 E_1, Pv_2=\mu_2 E_2$.
\end{proposition}

The relevance with Theorem \ref{thm:11} is that at $x_0$ where $|s|^2$ attaints its maximum we have
\begin{eqnarray*}
0&\ge& \aint \partial_{v}\bar{\partial}_{\bar{v}} |s|^2 d\theta(v) =\aint \langle \nabla_v s, \bar{\nabla}_{\bar{v}}
\bar{s}\rangle + \sum_{i=1}^k |a_{ij}|^2
( R_{v\bar{v}i\,\overline{i}}+  R_{v\bar{v}j\, \overline{j}} )\, d\theta(v).
\end{eqnarray*}
The integral is clearly independent of the choice of a unitary frame of the two dimensional space spanned by $\{\frac{\partial}{\partial z_{i}}, \frac{\partial}{\partial z_{j}}\}$, or the choice of a unitary frame $\{E_1, E_2\}$ of $\Sigma$. If the right hand side of (\ref{eq:23}) had a positive lower bound, the maximum principle would  show that $|s|^2=0$ at $x_0$, thus $|s|^2=0$ everywhere, which gives another proof Theorem \ref{thm:11}.

Since the estimates of Proposition  \ref{prop:21} have other applications we include a proof here. The proof needs some basic algebra and
computations. Let $a\in \mathfrak{u}(m)$ be an element of the Lie algebra of $\mathsf{U}(m)$. Consider the function:
$$
f(t)=\aint H(e^{t a} X)\, d\theta(X).
$$
By the choice of $\Sigma$,  $f(t)$ attains its minimum at $t=0$. This implies that $f'(0)=0$ and $f''(0)\ge 0$. Hence
\begin{eqnarray}
&\, &\aint \left( R(a(X), \overline{X}, X, \overline{X}) +R(X, \bar{a}(\overline{X}), X, \overline{X})\right)\,
d\theta(X)=0; \label{eq:25}\\
&\,& \aint \left(R(a^2(X), \overline{X}, X, \overline{X}) +R(X, \bar{a}^2(\overline{X}), X, \overline{X})+4R(a(X),
\bar{a}(\overline{X}), X, \overline{X})\right) d\theta(X) \nonumber\\
&\, & + \aint \left( R(a(X), \overline{X}, a(X), \overline{X})+ R(X, \bar{a}(\overline{X}), X,
\bar{a}(\overline{X})\right) d\theta(X) \ge 0. \label{eq:26}
\end{eqnarray}
We exploit these by looking into some special cases of $a$. Let  $W\perp \Sigma$ and $Z\in \Sigma$ be two fixed vectors.
Let $a=\i \left(Z\otimes \overline{W}+W\otimes \overline{Z}\right)$. Then
$$
a(X)=\i \langle X, \overline{Z}\rangle W;\quad  a^2(X)=-\langle X, \overline{Z}\rangle Z.
$$

To show (\ref{eq:1st}), let us apply (\ref{eq:25})  to the above $a$ and also to the one with $W$ being replaced by $\i W$, and add the resulting two estimates together,   we get
$$ \aint \langle X, \overline{Z}\rangle R(W, \overline{X}, X, \overline{X}) \,d\theta(X) =0. $$
Take $Z=E_1$, we have
\begin{eqnarray*}
0 & = & \aint x_1 R(W, \overline{X}, X, \overline{X}) \,d\theta(X) \\
& = & \aint \big(|x_1|^4 R(W, \overline{E}_1, E_1, \overline{E_1}) + 2|x_1x_2|^2 R(W, \overline{E}_1, E_2, \overline{E_2}) \big) \,d\theta(X) \\
& = & \frac{1}{3} \big(R(W, \overline{E}_1, E_1, \overline{E_1}) +  R(W, \overline{E}_1, E_2, \overline{E_2}) \big) \\
& = & \frac{2}{3} \aint \big(|x_1|^2 R(W, \overline{E}_1, E_1, \overline{E_1}) + |x_2|^2 R(W, \overline{E}_1, E_2, \overline{E_2}) \big) \,d\theta(X) \\
& = & \frac{2}{3} \aint R(W, \overline{E}_1, X, \overline{X}) \,d\theta(X)
\end{eqnarray*}
Similarly, $\aint R(W, \overline{E}_2, X, \overline{X}) \,d\theta(X) = 0$, hence (\ref{eq:1st}) holds.

Next we prove (\ref{eq:24}). Applying (\ref{eq:26}) to the above special $a$ and also to the one with $W$ being replaced by $\sqrt{-1}W$, and add the resulting two estimates together,   we have that
\begin{equation}\label{eq:27}
4\aint |\langle X, \overline{Z}\rangle|^2R(W, \overline{W}, X, \overline{X}) d\theta(X)\ge \aint \langle X,
\overline Z\rangle  R(Z, \overline{X}, X, \overline{X}) +\langle Z, \overline{X}\rangle R(X, \overline{Z}, X,
\overline{X}).
\end{equation}
Let $Z=E_i$, we get
\begin{equation*}\label{eq:27}
4\aint |x_i|^2R(W, \overline{W}, X, \overline{X}) d\theta(X)\ge \aint x_i  R(E_i, \overline{X}, X, \overline{X}) +\overline{x}_i R(X, \overline{E}_i, X, \overline{X})\,d\theta .
\end{equation*}
Add up $i=1,2$, it yields
$$ 4\aint R(W, \overline{W}, X, \overline{X}) d\theta(X)\ge 2 \aint   R(X, \overline{X}, X, \overline{X}) \,d\theta = \frac{2}{3} S_2(x_0,\Sigma ),
$$
thus formula (\ref{eq:24}) holds.

To prove  (\ref{eq:23}) we need to
consider general $W$ which may not be perpendicular to $\Sigma$.
In other words, we consider the case $|Z|=|W|=1$ and $Z\in \Sigma$.
\begin{eqnarray*}
a(X)&=&\i\left( \langle X, \overline{Z}\rangle W+\langle X, \overline{W}\rangle Z\right) \\
a^2(X)&=& -\langle X, \overline{Z}\rangle \left(Z+\langle W, \overline{Z}\rangle W\right)-\langle X,
\overline{W}\rangle \left(W+\langle Z, \overline{W}\rangle Z\right).
\end{eqnarray*}
Apply this to (\ref{eq:26}) and also apply to $a$ with $W$ being replaced by $\i W$, add the results up we get the
estimate:
\begin{eqnarray}
&\,& 4\aint |\langle X, \overline{Z}\rangle|^2 R(W,\overline{W}, X, \overline{X})+ |\langle X, \overline{W}\rangle|^2
R(Z, \overline{Z},  X, \overline{X})d\theta(X) \nonumber \\
&\ge& \aint \langle X, \overline{Z}\rangle  R(Z, \overline{X}, X, \overline{X}) +\langle Z, \overline{X}\rangle R(X,
\overline{Z}, X, \overline{X})\, d\theta(X)\label{eq:28}\\
&\,& +\aint \langle X, \overline{W}\rangle  R(W, \overline{X}, X, \overline{X}) +\langle W, \overline{X}\rangle R(X,
\overline{W}, X, \overline{X})\, d\theta(X)\nonumber\\
&\,& +2\aint \langle X, \overline{Z}\rangle \langle X, \overline{W}\rangle R(W, \overline{X}, Z, \overline{X})+\langle
Z, \overline{X}\rangle \langle W, \overline{X}\rangle R(X, \overline{W}, X, \overline{Z})\, d\theta(X).\nonumber
\end{eqnarray}
Apply the above to $Z=E_i$ ($i=1, 2$) and sum the results together we have
\begin{eqnarray}
&\,& 4\aint  R(W,\overline{W}, X, \overline{X})+ |\langle X, \overline{W}\rangle|^2 \left(R_{1 \overline{1}  X
\overline{X}}  +R_{2 \overline{2}  X \overline{X}}\right) d\theta(X) \nonumber \\
&\ge& \frac{2}{3} S_2(x_0, \Sigma)+4\aint  \langle X, \overline{W}\rangle R(W, \overline{X}, X, \overline{X})+ \langle W,
\overline{X}\rangle R(X, \overline{W}, X, \overline{X})\, d\theta(X).\label{eq:29}
\end{eqnarray}
Now we want apply the above to all unit vectors $W
 \in \Sigma'$ and take the average. Denote by $P$ the orthogonal projection to $\Sigma$. Let $\{ v_1, v_2\}$ be a unitary basis of $\Sigma'$. Replacing $\{ v_1, v_2\}$ by a new unitary basis $\{ av_1+bv_2, -\overline{a}v_1 + \overline{b}v_2\}$ (where $|a|^2+|b|^2 =1$) if necessary, we pay assume that $Pv_1 \perp Pv_2$. So we can choose a unitary basis $\{ E_1, E_2\}$ of $\Sigma$ such that $v_1=\mu_1 E_1+\alpha E'$ and $v_2=\mu_2 E_2+ \beta E''$ with $\mu_i$ being the singular value of the projection to $\Sigma$ restricted to $\Sigma'$, and with $E', E'' \in \Sigma^{\perp}$. Now we  apply (\ref{eq:29}) to $W\in \mathbb{S}^3\subset \Sigma'$. First we observe that
\begin{eqnarray*}
2\aint  R(v_1,\overline{v}_1, X, \overline{X})+R(v_2,\overline{v}_2, X, \overline{X})\, d\theta(X)&=&
4\aint_{\mathbb{S}^3\subset \Sigma'} \aint R(W,\overline{W}, X, \overline{X})\, d\theta(X)\, d\theta(W).
\end{eqnarray*}
The second term on the left hand side of (\ref{eq:29}) has average value
 \begin{eqnarray*}
 L_2 &=&4\aint_{\mathbb{S}^3\subset \Sigma'} \aint|\langle X, \overline{W}\rangle|^2 \left(R_{1 \overline{1}  X
\overline{X}}  +R_{2 \overline{2}  X \overline{X}}\right) d\theta(X)\, d\theta(W)\\
&=& 2\aint\left(|\langle X, \overline{v}_1\rangle|^2 +|\langle X, \overline{v}_2\rangle|^2\right)\left(R_{1 \overline{1}  X
\overline{X}}  +R_{2 \overline{2}  X \overline{X}}\right) d\theta(X)
 \end{eqnarray*}
Express $X=x_1E_1+x_2 E_2$, we have
\begin{eqnarray*}
 2\aint |\langle X, \overline{v}_1\rangle|^2 \left(R_{1 \overline{1}  X
\overline{X}}  +R_{2 \overline{2}  X \overline{X}}\right) d\theta(X) &=& 2|\mu_1|^2\aint |x_1|^2( R_{1 \overline{1}  X
\overline{X}}  +R_{2 \overline{2}  X \overline{X}} ) \, d\theta\\
&=&2|\mu_1|^2\aint (|x_1|^4 R_{1\bar{1}1\bar{1}}+R_{1\bar{1}2\bar{2}}|x_1|^2 |x_2|^2)\, d\theta\\
 &\, & +2|\mu_1|^2\aint (|x_1|^4 R_{1\bar{1}2\bar{2}}+R_{2\bar{2}2\bar{2}}|x_1|^2|x_2|^2)\, d\theta\\
&=& \frac{2|\mu_1|^2}{3} R_{1\bar{1}1\bar{1}}+ |\mu_1|^2R_{1\bar{1}2\bar{2}}+\frac{|\mu_1|^2}{3}R_{2\bar{2}2\bar{2}}.
\end{eqnarray*}
Similarly we have
$$
 2\aint |\langle X, \overline{v}_2\rangle|^2 \left(R_{1 \overline{1}  X
\overline{X}}  +R_{2 \overline{2}  X \overline{X}}\right) d\theta(X)=\frac{2|\mu_2|^2}{3} R_{2\bar{2}2\bar{2}}+ |\mu_2|^2R_{1\bar{1}2\bar{2}}+\frac{|\mu_2|^2}{3}R_{1\bar{1}1\bar{1}}.
$$
In the mean time, the second term on the right hand side of (\ref{eq:29}) has average value
\begin{eqnarray*}
R_2 &=& 4\aint_{\mathbb{S}^3\subset \Sigma'}\aint    \langle X, \overline{W}\rangle R(W, \overline{X}, X, \overline{X})+ \langle W,
\overline{X}\rangle R(X, \overline{W}, X, \overline{X})\, d\theta(X)\, d\theta(W)
\\
&=& 2\aint    \langle X, \overline{v}_1\rangle R(v_1, \overline{X}, X, \overline{X})+ \langle v_1,
\overline{X}\rangle R(X, \overline{v}_1, X, \overline{X})\, d\theta(X)\\
&\,&+ 2\aint    \langle X, \overline{v}_2\rangle R(v_2, \overline{X}, X, \overline{X})+ \langle v_2,
\overline{X}\rangle R(X, \overline{v}_2, X, \overline{X})\, d\theta(X).
 \end{eqnarray*}
 We compute
\begin{eqnarray*}
2\aint    \langle X, \overline{v}_1\rangle R(v_1, \overline{X}, X, \overline{X})
 &=& 2 \aint x_1 (|\mu_1|^2R_{1\overline{X} X\overline{X}}+\overline{\mu}_1\alpha R_{E'\overline{X} X \overline{X}})d\theta\\
&=& 2|\mu_1|^2\aint x_1 R_{1\overline{X} X\overline{X}}\, d\theta+\frac{2}{3}\overline{\mu}_1\alpha (R_{E'\bar{1}1\bar{1}}+R_{E'\bar{1} 2\bar{2}})\\
&=&2|\mu_1|^2\aint \left(|x_1|^4 R_{1\bar{1}1\bar{1}}+2|x_1|^2|x_2|^2 R_{1\bar{1}2\bar{2}}\right)d\theta \\&=&\frac{2|\mu_1|^2}{3}(R_{1\bar{1}1\bar{1}}+R_{1\bar{1}2\bar{2}}).
\end{eqnarray*}
Hence after adding the result with its conjugation we have
$$
2\aint  \langle X, \overline{v}_1\rangle R(v_1, \overline{X}, X, \overline{X})+\langle v_1,
\overline{X}\rangle R(X, \overline{v}_1, X, \overline{X})\, d\theta(X) =\frac{4|\mu_1|^2}{3}(R_{1\bar{1}1\bar{1}}+R_{1\bar{1}2\bar{2}}).
$$
Similarly we also have
$$
2\aint  \langle X, \overline{v}_2\rangle R(v_2, \overline{X}, X, \overline{X})+\langle v_2,
\overline{X}\rangle R(X, \overline{v}_2, X, \overline{X})\, d\theta(X) =\frac{4|\mu_2|^2}{3}(R_{2\bar{2}2\bar{2}}+R_{1\bar{1}2\bar{2}}).
$$
Therefore we have
$$ R_2 = \frac{4|\mu_1|^2}{3}(R_{1\bar{1}1\bar{1}}+R_{1\bar{1}2\bar{2}}) + \frac{4|\mu_2|^2}{3}(R_{2\bar{2}2\bar{2}}+R_{1\bar{1}2\bar{2}}) .$$

Putting them all together and noting that $S_2(x_0, \Sigma)=R_{1\bar{1}1\bar{1}}+2R_{1\bar{1}2\bar{2}}+R_{2\bar{2}2\bar{2}}$ we get
\begin{eqnarray*}
2\aint  R(v_1,\overline{v}_1, X, \overline{X})+R(v_2,\overline{v}_2, X, \overline{X})\, d\theta(X)&\ge&
\frac{2}{3} S_2(x_0, \Sigma)+\frac{|\mu_1|^2+|\mu_2|^2}{6} S_2(x_0, \Sigma)\\
&\,&+ \ \frac{|\mu_1|^2-|\mu_2|^2}{2}(R_{1\bar{1}1\bar{1}}-R_{2\bar{2}2\bar{2}}).
\end{eqnarray*}
This proves (\ref{eq:23}), which completes the proof of the proposition.

\section{The high dimensional case}

Now for a $k$-dimensional subspace $\Sigma\subset T'_{x_0}M$ with $S_k(x_0, \Sigma)=\inf_{\Sigma'} S_k(x_0, \Sigma')$ we derive estimates similar to Proposition \ref{prop:21}.
\begin{proposition}\label{prop:31} Let $\Sigma, \Sigma'$ be two $k$-dimensional subspaces of $T_{x_0}'M$. Assume that $S_k(x_0, \Sigma)=\inf_{\Sigma'} S_k(x_0, \Sigma')$ and $\{v_1, \cdots, v_k\}$ and $\{ E_1, \ldots , E_k\}$ be  unitary frame at $x_0$ of $\Sigma'$ and $\Sigma$ respectively. Let $\{\mu_i\}$ be the singular values of the projection of $\Sigma'$ towards $\Sigma$.  Then for any $E\in \Sigma$, $E'\perp \Sigma$, we have
\begin{eqnarray}
\aint R(E,\overline{E}', Z, \overline{Z})d\theta(Z)&=&\aint R(E', \overline{E}, Z, \bar{Z})d\theta(Z)=0, \label{eq:3-1st} \\
\aint \left(  \sum_{j=1}^k R(v_j, \overline{v}_j, Z, \overline{Z})\right)\! d\theta(Z) &\ge& \frac{1}{k(k+1)}\left(\sum_{i=1}^k(1-|\mu_i|^2)\right) S_k(x_0, \Sigma)
\label{eq:31}\\
&\, &+\  \frac{1}{k} \sum_{i=1}^k \left(|\mu_i|^2 \sum_{j=1}^kR_{i\bar{i}j\bar{j}}\right),\nonumber\\
 \aint R(E',\overline{E}', Z,   \overline{Z})\, d\theta(Z)&\ge&\frac{S_k(x_0,\Sigma)}{k(k+1)}. \label{eq:32}
\end{eqnarray}
\end{proposition}
\begin{proof} Let $f(t)$ be the function constructed by the variation under the $1$-parameter family of unitary transformations.  The equations (\ref{eq:25}) and (\ref{eq:26}), as well as their proofs,  remain the same. The proof of (\ref{eq:3-1st}) and (\ref{eq:32}) are exactly analogous to that of (\ref{eq:1st}) and (\ref{eq:24}), so we omit it here.

 To prove (\ref{eq:31}) we apply (\ref{eq:28}) to $Z=E_i$ and add the results up we have
\begin{eqnarray}
&\,& 4\aint  R(W,\overline{W}, X, \overline{X})+ |\langle X, \overline{W}\rangle|^2 \left(\sum_{j=1}^kR_{j \overline{j}  X
\overline{X}} \right) d\theta(X) \label{eq:29-2} \\
&\ge& \frac{4}{k(k+1)} S_k(x_0, \Sigma)+(k+2)\aint  \langle X, \overline{W}\rangle R(W, \overline{X}, X, \overline{X})+ \langle W,
\overline{X}\rangle R(X, \overline{W}, X, \overline{X})\, d\theta(X).\nonumber
\end{eqnarray}
For the given $k$-planes $\Sigma$ and $\Sigma'$, we may always take unitary basis $\{ v_1, \ldots , v_k\}$ of $\Sigma'$ and unitary basis $\{ E_1, \ldots , E_k\}$ of $\Sigma$, so that the restriction on $\Sigma'$ of the projection map to $\Sigma$ is given by a diagonal matrix under these basis. That is,  $v_i=\mu_iE_i+\alpha_i E'_i$ for each $i$ with $E_i'\perp \Sigma$, and $\{\mu_i\}$ be the singular values of the projection from $\Sigma'$ to $\Sigma$.

Now we apply (\ref{eq:29-2}) to $W\in\mathbb{S}^{2k-1}\subset \Sigma'$ and take the average of the result.
We have that
\begin{eqnarray*}
\frac{4}{k}\aint \sum_{i=1}^k R(v_i,\overline{v}_i, X, \overline{X})\, d\theta(X)&=&
4\aint_{\mathbb{S}^{2k-1}\subset \Sigma'} \aint R(W,\overline{W}, X, \overline{X})\, d\theta(X)\, d\theta(W).
\end{eqnarray*}
Similarly we can calculate,
\begin{eqnarray*}
 &\,&4\aint_{\mathbb{S}^{2k-1}\subset \Sigma'} \aint|\langle X, \overline{W}\rangle|^2 \left(\sum_{j=1}^k R_{j \overline{j}  X
\overline{X}} \right) d\theta(X)\, d\theta(W)\\
&=& \frac{4}{k}\aint\left(\sum_{i=1}^k|\langle X, \overline{v}_i\rangle|^2\right)\left(\sum_{j=1}^k R_{j \overline{j}  X
\overline{X}}\right) d\theta(X)\\
&=& \frac{4}{k} \frac{1}{k(k+1)}\sum_{i=1}^k \left(|\mu_i|^2 \left(S_k+\sum_{j=1}^k R_{i\bar{i}j\bar{j}}\right)\right);
 \end{eqnarray*}
 while
\begin{eqnarray*}
&\,& (k+2)\aint_{\mathbb{S}^{2k-1}\subset \Sigma'}\aint    \langle X, \overline{W}\rangle R(W, \overline{X}, X, \overline{X})+ \langle W,
\overline{X}\rangle R(X, \overline{W}, X, \overline{X})\, d\theta(X)\, d\theta(W)
\\
&=& \frac{k+2}{k}\aint    \sum_{i=1}^k\langle X, \overline{v}_i\rangle R(v_i, \overline{X}, X, \overline{X})+\, \langle v_i,
\overline{X}\rangle R(X, \overline{v}_i, X, \overline{X})\, d\theta(X).
 \end{eqnarray*}
 Using (\ref{eq:3-1st}), the first half in the above can be further simplified into
 \begin{eqnarray*}
 &\,&\frac{k+2}{k}\aint    \sum_{i=1}^k\langle X, \overline{v}_i\rangle R(v_i, \overline{X}, X, \overline{X})\, d\theta(X)\\
&=&\quad \frac{k+2}{k}\aint    \sum_{i=1}^k x_i\left(|\mu_i|^2 R_{i \overline{X} X \overline{X}}+\overline{\mu}_i\alpha_i R_{E_i'\overline{X} X \overline{X}}\right)d\theta(X)\\
&=&\quad \frac{k+2}{k}\aint    \sum_{i=1}^k x_i\left(|\mu_i|^2 R_{i \overline{X} X \overline{X}}\right)d\theta(X)\\
&=&\quad \frac{k+2}{k}   \sum_{i=1}^k  |\mu_i|^2 \aint \left( |x_i|^4 R_{i\overline{i}i\overline{i}} + 2\sum_{j\neq i} |x_ix_j|^2 R_{i \overline{i} j \overline{j}}\right) d\theta(X)\\
&=& \quad \frac{k+2}{k}\frac{2}{k(k+1)} \sum_{i=1}^k \left(|\mu_i|^2 \sum_{j=1}^k R_{i\bar{i}j\bar{j}}\right).
 \end{eqnarray*}
 Putting the above together we have (\ref{eq:32}).
\end{proof}

\vspace{0.05cm}

\section*{Acknowledgments} { We would like to thank James McKernan for his interests and discussions, Masataka Iwai for sharing his example, Man-Chun Lee and Luen-Fai Tam for pointing out a discrepancy in the earlier version of the paper. }

\end{document}